\newtheorem{theorem}{Theorem}[section]
\newtheorem{proposition}[theorem]{Proposition}
\newtheorem{lemma}[theorem]{Lemma}
\newtheorem{corollary}[theorem]{Corollary}
\newtheorem*{question*}{Question}
\theoremstyle{definition}
\newtheorem{definition}[theorem]{Definition}
\newtheorem*{definition*}{Definition}
\theoremstyle{remark}
\newcommand{\ux}{\underline{x}}
\newcommand{\uh}{\underline{h}}
\newcommand{\EE}{\mathbb{E}}
\newcommand{\CC}{\mathbb{C}}
\newcommand{\QQ}{\mathbb{Q}}
\newcommand{\NN}{\mathbb{N}}
\newcommand{\FF}{\mathbb{F}}
\newcommand{\ZZ}{\mathbb{Z}}
\newcommand{\cP}{\mathcal{P}}
\newcommand{\cL}{\mathcal{L}}
\title{On weighted multilinear polynomial averages in finite fields}
\author{Guo-Dong Hong}
\address{Department of Mathematics, California Institute of Technology, Pasadena, CA 91125, USA}
\email{ghong@caltech.edu}
\begin{document}
\maketitle

\begin{abstract}
    We study the weighted multilinear polynomial averages in finite fields.
   The essential ingredient is the $u^s$-norm control of the corresponding weighted multilinear polynomial averages in finite fields, which is motivated by Teräväinen \cite{T24}. As an application, we prove an asymptotic formula for the number of the following multidimensional rational function progressions in the subsets of $\FF_p^D$:
   \[
\ux, \ux+ P_1(\varphi(y))v_1,\cdots, \ux+ P_k(\varphi(y))v_k,
\]
where $\mathcal{V}=\{v_1, \cdots, v_{k} \in \ZZ^D\}$ is a collection of nonzero vectors, $\mathcal{P}= \{P_1, \cdots, P_{k}\in \ZZ[y]\}$ is a collection of linearly independent polynomials with zero constant terms, and $\varphi(y) \in \QQ(y)$ is a nonzero rational function.

\end{abstract}

\section{Introduction}

\subsection{Weighted multilinear polynomial averages}

Throughout the paper, let $D,k \in \NN$, $\mathcal{V}=\{v_1, \cdots, v_{k} \in \ZZ^D\}$ be a collection of nonzero vectors, and $\mathcal{P}= \{P_1, \cdots, P_{k}\in \ZZ[y]\}$ be a collection of linearly independent polynomials with zero constant terms. We set $d=d(\cP):=\max_{i=1,\cdots,k} \text{deg}(P_i)$.

\begin{definition}
    We define the $u^s$-norm over $\FF_p$ for $\theta: \FF_p \rightarrow \CC$ as
\[
\|\theta\|_{u^s(\FF_p)}:= \sup_{\substack{P(y)\in \ZZ[y] \\ \text{deg}(P)\leq s-1}}
\left|
\EE_{y \in \FF_p} \theta(y) e_p(-P(y ))
\right|.
\]
And we say that $\theta$ is strongly $s$-uniform if there exist $C=C(\theta,s), c=c(\theta,s)>0$ which are independent of $p$ such that
\[
\|\theta-\EE \theta\|_{u^s(\FF_p)} \leq Cp^{-c}.
\]
\end{definition}

Consider the following weighted multilinear polynomial averaging operators:
\begin{equation} \label{weighted multilinear polynomial averaging operators}
    G^{\theta}_{k}(\ux):= \EE_{y \in \FF_p} \theta(y) \prod_{i=1}^k f_i(\ux+P_i(y)v_i).
\end{equation}
The main result of this paper is the following inequality for the weighted multilinear polynomial averaging operators in finite fields:

\begin{theorem} \label{thm_weighted polynomial multiple ergodic averages}
    Let $\theta: \FF_p \rightarrow \CC$ be 1-bounded. Assume that $\theta$ is strongly $(d+1)$-uniform.
    Then there exist $p_0=p_0(\mathcal{V}, \cP,\theta) \in \NN$, and $c=c(\cP,\theta)>0$ such that for any primes $p > p_0$, we have
    \[
     \EE_{\ux \in \FF_p^D}
     \left|
    G^{\theta}_{k}(\ux)- \EE_{y \in \FF_p} \theta(y) \cdot
      \prod_{i=1}^k 
     \EE_{n_i\in \FF_p}f_i( \underline{x}+ n_iv_i)
     \right|^2
     =O_{\cP, \theta}\left(p^{-c} \right),
    \]
    for any 1-bounded functions $f_0, \cdots, f_k: \FF_p^D \rightarrow \CC $.
\end{theorem}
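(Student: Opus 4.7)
The plan is to decompose $\theta = c + \theta'$ with $c := \EE_y \theta(y)$ and $\theta'(y) := \theta(y) - c$. By the strong $(d+1)$-uniformity hypothesis, $\|\theta'\|_{u^{d+1}(\FF_p)} \leq C p^{-c_1}$ for constants depending only on $\theta$. The identity
\[
G^{\theta}_{k}(\ux) - \Bigl(\EE_y\theta(y)\Bigr)\prod_{i=1}^{k} \EE_{n_i}f_i(\ux+n_i v_i) = c\Bigl(G^{1}_{k}(\ux)-\prod_{i=1}^{k} \EE_{n_i}f_i(\ux+n_i v_i)\Bigr) + G^{\theta'}_{k}(\ux)
\]
splits the task into a bound on the unweighted error (the term with coefficient $c$) and a bound on the $\theta'$-weighted average.

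For the unweighted error, I would invoke (or re-derive via the standard PET induction with trivial weight) the finite-field analogue of Peluse's polynomial multiple recurrence theorem: under the hypotheses of linear independence and zero constant term for the $P_i$, the unweighted average $G^{1}_k(\ux)$ is $L^2(\ux)$-close to the factored product $\prod_i \EE_{n_i}f_i(\ux+n_i v_i)$ with a polynomial-savings error $O_{\cP}(p^{-c'})$. This is by now standard and requires no information on $\theta$.

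For the $\theta'$-weighted piece, I would expand
\[
\EE_\ux |G^{\theta'}_k(\ux)|^2 = \EE_{y,y'}\theta'(y)\overline{\theta'(y')}\,\EE_\ux \prod_i f_i(\ux+P_i(y)v_i)\overline{f_i(\ux+P_i(y')v_i)}
\]
and run a PET (Polynomial Exhaustion Technique) induction: iterated Cauchy-Schwarz in $\ux$ together with van der Corput-style shifts introducing new difference variables $h_1, h_2, \ldots$ successively lower the degrees of the polynomials $P_i$. After a bounded number of steps depending only on $\cP$, the $f_i$'s are absorbed using 1-boundedness, leaving a multilinear correlation of shifts of $\theta'$ against polynomial phase data.

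The main obstacle is then to reduce the $u^s$-control on $\theta'$ that is naturally produced by the PET (for some $s = s(\cP)$ possibly much larger than $d+1$) to the $u^{d+1}$-control actually assumed. The key structural input is that the polynomial phases extracted from the PET expressions have degree at most $d$, inherited from $\deg P_i \leq d$, which permits the Peluse--Prendiville degree-lowering procedure — in the weighted setting, following Ter\"av\"ainen~\cite{T24} — to iteratively replace $u^s$-correlation with correlation against a degree-$\leq d$ phase, i.e.\ against the $u^{d+1}$-norm. Executing this degree-lowering carefully is the technical heart of the proof; once completed, combining with the unweighted bound yields the claimed $O_{\cP,\theta}(p^{-c})$ estimate.
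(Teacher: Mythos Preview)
Your high-level decomposition $\theta = \EE\theta + \theta'$ and separate treatment of the two pieces matches the paper, as does the appeal to the unweighted multidimensional Peluse--Kuca theorem for the constant-weight part. The treatment of the $\theta'$-weighted term, however, differs. Rather than running PET from scratch to absorb the $f_i$'s and then performing a separate degree-lowering from some large $u^s$ down to $u^{d+1}$, the paper leverages Kuca's unweighted asymptotic (Theorem~\ref{thm_counting polynomial progressions}) as a black box \emph{inside} the argument as well. Concretely, the paper introduces dual functions $F^{\theta}_{l,k}$ and proves a box-norm control $\|G^{\theta}_{l,k}\|_{\cL^2}^{O(1)} \le \|F^{\theta}_{l,k}\|_{\FF_p^D,\langle v_l\rangle} + O(p^{-c})$ (Lemma~\ref{lma_Gowers norm control}); in the proof of that lemma, the unweighted theorem is applied twice to replace inner averages $\EE_y \prod_i g_i(\ux+P_i(y)v_i)$ by decoupled products $\prod_i \EE_{n_i} g_i(\ux+n_iv_i)$, so no PET step is ever carried out. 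An elementary inverse theorem for this $(\FF_p^D,\langle v_l\rangle)$ box norm then bounds everything by $\|G^{\theta}_{l-1,k}\|_{\cL^2}^2$, giving an induction on the number $l$ of functions (not on polynomial complexity) whose base case $l=1$ yields the $u^{d+1}$ bound directly via Parseval. Thus in the paper's scheme no intermediate $u^s$ or $U^s$ control with $s>d+1$ ever appears. Your PET-plus-degree-lowering route is closer in spirit to Ter\"av\"ainen's original one-dimensional argument and should in principle also succeed, but you would have to track the weight through the multidimensional PET along the directions $v_i$ and then genuinely execute the degree-lowering; the paper's approach buys a cleaner proof by recycling the unweighted theorem rather than reproving its content.
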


The study of the $\cL^2$-norm convergence of \eqref{weighted multilinear polynomial averaging operators} has a long history in ergodic theory. The unweighted case of Theorem \ref{thm_weighted polynomial multiple ergodic averages} was independently solved by Host and Kra \cite{HK05}, and Leibman \cite{L05} when $D=1$, and was settled by Walsh \cite{W12} for arbitrary $D$ in a much general setting in ergodic theory. For the weighted case, a prominent example is the prime-weighted ergodic averages, and the corresponding norm convergence problem is known due to Frantzikinakis, Host, and Kra \cite{FHK07} and Wooley and Ziegler \cite{WZ12}. Another example is the study of the Möbius weight, which was proved by Frantzikinakis and Host \cite{FH17}. We refer the readers to Frantzikinakis and Host \cite{FH17} for more discussion on more general weighted ergodic averages.

While all the results mentioned above are of a more general nature, Theorem \ref{thm_weighted polynomial multiple ergodic averages} focuses on the quantitative aspects, and can be viewed as a finite version of the quantitative weighted $\cL^2$-norm convergence result of Frantzikinakis and Kra \cite{FK05}.
Moreover, we provide a criterion in Theorem \ref{thm_weighted polynomial multiple ergodic averages} for when quantitative $\cL^2$-norm convergence occurs, and we will present a new application below.

Theorem \ref{thm_weighted polynomial multiple ergodic averages} will be deduced as a consequence of Proposition \ref{prop_u^d control} below, which concerns the $u^s$-norm control of the weighted averaging operators.
This formulation is motivated by Teräväinen \cite{T24}, in which he demonstrated that obtaining $u^s$-norm control rather than Gowers $U^s$-norm control is possible for particular polynomial multiple ergodic averages problems. Therefore, Proposition \ref{prop_u^d control} when $D=1$ is essential due to Teräväinen \cite{T24}, and we further explore this idea in the multidimensional case in finite fields.

\subsection{Weighted multidimensional polynomial Szemerédi theorem}

Consider the following weighted counting operators:
\[
\Lambda^{\theta}_{k}(f_0,...,f_k):=  \EE_{\underline{x}\in \FF^D_p} \EE_{y\in \FF_p} \theta(y) f_0(\underline{x}) \prod_{i=1}^k f_i( \underline{x}+ P_i(y)v_i ).
\]
As a consequence of Theorem \ref{thm_weighted polynomial multiple ergodic averages} by applying the Cauchy-Schwarz inequality, we have the following weighted multidimensional polynomial Szemerédi theorem in finite fields:

\begin{theorem} \label{thm_weighted counting polynomial progressions}
    Let $\theta: \FF_p \rightarrow \CC$ be 1-bounded. Assume that $\theta$ is strongly $(d+1)$-uniform.
    Then there exist $p_0=p_0(\mathcal{V}, \cP,\theta) \in \NN$, and $c=c(\cP,\theta)>0$ such that for any primes $p > p_0$, we have
    \[
     \Lambda^{\theta}_{k}(f_0,...,f_k)
     =\EE_{y \in \FF_p} \theta(y) \cdot 
     \EE_{\underline{x}\in \FF^D_p}  f_0(\underline{x}) \prod_{i=1}^k 
     \EE_{n_i\in \FF_p}f_i( \underline{x}+ n_iv_i)
    + O_{\cP, \theta}\left(p^{-c} \right),
    \]
    for any 1-bounded functions $f_0, \cdots, f_k: \FF_p^D \rightarrow \CC $.
\end{theorem}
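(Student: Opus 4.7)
The plan is to deduce this from Theorem \ref{thm_weighted polynomial multiple ergodic averages} by a one-line Cauchy--Schwarz argument, since the counting operator $\Lambda^{\theta}_k$ is essentially the pairing of $f_0$ with $G^{\theta}_k$.

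First I would introduce the shorthand
\[
H(\ux):= \EE_{y \in \FF_p} \theta(y)\cdot \prod_{i=1}^{k} \EE_{n_i \in \FF_p} f_i(\ux + n_i v_i),
\]
so that the desired main term on the right-hand side of the theorem is exactly $\EE_{\ux \in \FF_p^D} f_0(\ux) H(\ux)$ (note that $\EE_y \theta(y)$ is a scalar and can be pulled outside the $\ux$-expectation). Unwinding the definitions also gives
\[
\Lambda^{\theta}_{k}(f_0,\dots,f_k) = \EE_{\ux \in \FF_p^D} f_0(\ux)\, G^{\theta}_{k}(\ux),
\]
so the difference between the two sides of the theorem is
\[
\Lambda^{\theta}_{k}(f_0,\dots,f_k) - \EE_{\ux} f_0(\ux) H(\ux) = \EE_{\ux \in \FF_p^D} f_0(\ux)\bigl(G^{\theta}_{k}(\ux) - H(\ux)\bigr).
\]

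Next I would apply Cauchy--Schwarz in the variable $\ux$, using the $1$-boundedness of $f_0$:
\[
\left|\EE_{\ux} f_0(\ux)\bigl(G^{\theta}_{k}(\ux) - H(\ux)\bigr)\right|^{2} \le \EE_{\ux} |f_0(\ux)|^{2} \cdot \EE_{\ux} \bigl|G^{\theta}_{k}(\ux) - H(\ux)\bigr|^{2} \le \EE_{\ux} \bigl|G^{\theta}_{k}(\ux) - H(\ux)\bigr|^{2}.
\]
Now Theorem \ref{thm_weighted polynomial multiple ergodic averages} applies directly: its conclusion is precisely that the right-hand side is $O_{\cP,\theta}(p^{-c})$ for some $c = c(\cP,\theta) > 0$ and all primes $p > p_0(\mcV, \cP, \theta)$.

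Taking square roots yields the asserted bound $O_{\cP,\theta}(p^{-c/2})$ (after relabeling the constant $c$), completing the deduction. Since every step is a formal manipulation with Cauchy--Schwarz and all the analytic content is packaged into Theorem \ref{thm_weighted polynomial multiple ergodic averages}, there is no genuine obstacle here; the only point to verify carefully is that the ``main term'' predicted by Theorem \ref{thm_weighted polynomial multiple ergodic averages}, namely $\EE_y \theta(y)\prod_i \EE_{n_i} f_i(\ux + n_i v_i)$, is indeed the pointwise-in-$\ux$ quantity whose pairing with $f_0$ recovers the asserted main term on the right-hand side of Theorem \ref{thm_weighted counting polynomial progressions}, which follows from Fubini since $\EE_y \theta(y)$ is a constant independent of $\ux$.
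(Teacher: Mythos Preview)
Your proposal is correct and matches the paper's own approach: the paper explicitly states that Theorem~\ref{thm_weighted counting polynomial progressions} follows from Theorem~\ref{thm_weighted polynomial multiple ergodic averages} by applying the Cauchy--Schwarz inequality, without giving further details. Your write-up fills in exactly this one-line deduction.
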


The unweighted case of Theorem \ref{thm_weighted counting polynomial progressions} is known as the multidimensional polynomial Szemerédi theorem in finite fields. When $D=1$, Bourgain and Chang \cite{BC17} initiated the study of the following nonlinear Roth progression in finite fields:
\[
x,x+y,x+y^2.
\]
Dong, Li, and Sawin \cite{DLS20}, and Peluse \cite{P18} later independently extended this result to more general three-term polynomial progressions. After a while, longer-term polynomial progressions were solved by Peluse in her breakthrough paper \cite{P19}. To be more precise, Peluse studied the following polynomial progressions:
\[
x,x+P_1(y),\cdots,x+P_k(y),
\]
where $\{P_1, \cdots,P_k \in \ZZ[y\}$ is a collection of linearly independent polynomials with zero constant terms.

For the higher-dimensional case, Han, Lacey, and Yang \cite{HLY21} first extended to more general three-term polynomial progressions when $D=2$. Afterwards, Kuca generalized Peluse's polynomial Szemerédi theorem to the multi-dimensional setting in his papers \cite{K24b} and \cite{K24a}, and Theorem \ref{thm_weighted counting polynomial progressions} for $\theta =1$ is the main result in Kuca \cite{K24b}. Therefore, we view Theorem \ref{thm_weighted counting polynomial progressions} as a weighted multidimensional polynomial Szemerédi theorem in finite fields.

\subsection{Specific multidimensional rational function progressions}

In another direction, Bourgain and Chang \cite{BC17} asked if one can replace the polynomials by rational functions, and they consider the following configuration as evidence for the more general phenomenon of the "nonlinear" Szemerédi theorem:
\[
 x, x+y, x+1/y.
\]
The main distinction between the polynomials and the rational functions is that differentiation does not reduce the "complexity" of the rational functions, so that the framework in Peluse \cite{P19} does not apply. 

In \cite{HL25}, the author and Lim developed a new method, called algebraic geometry PET induction, to deal with more general three-term rational function progressions, and this was later extended to the higher-dimensional three-term case by Lim \cite{L25}. However, it appears that all the current techniques are limited to the three-term case, and significant new ideas are needed to study more general rational function progressions with longer terms.

The final result of this paper addresses this problem in a specific situation as a direct application of Theorem \ref{thm_weighted counting polynomial progressions} and Bombieri's bound for exponential sums in finite fields with rational function phases (see \cite[Proposition 2.2]{BC17}, for example).

\begin{proposition} \label{prop_counting rational function progressions}
Let $\varphi(y) \in \QQ(y)$ be a nonzero rational function. Then there exist $p_0=p_0(\varphi,\mathcal{V}, \cP) \in \NN$, and $c=c(\varphi,\cP)>0$ such that for any primes $p > p_0$, we have
    \[
    \EE_{\underline{x}\in \FF^D_p} \EE_{y\in \FF_p}^*  f_0(\underline{x}) \prod_{i=1}^k f_i( \underline{x}+ P_i(\varphi(y))v_i )
    \]
    \[
    =\EE_{\underline{x}\in \FF^D_p}  f_0(\underline{x}) \prod_{i=1}^k 
     \EE_{n_i\in \FF_p}f_i( \underline{x}+ n_iv_i)
    + O_{\cP,\varphi}\left(p^{-c} \right),
    \]
    for any 1-bounded functions $f_0, \cdots, f_k: \FF_p^D \rightarrow \CC $, where $\EE^*$ denotes the average excluding the poles of the rational function.
\end{proposition}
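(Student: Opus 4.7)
The plan is to encode the constrained average $\EE_y^*$ (over $y$ avoiding the poles of $\varphi$) as a weighted polynomial average with weight given by the pushforward of the uniform measure under $\varphi$, and then invoke Theorem \ref{thm_weighted counting polynomial progressions}. The required strong $(d+1)$-uniformity of this weight will follow directly from Bombieri's bound cited in the excerpt.

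Concretely, let $S := \{y \in \FF_p : \varphi(y) \text{ is defined}\}$, so $|S| = p - O_\varphi(1)$, and define
\[
\theta(w) := \frac{p}{|S|} \cdot \#\{y \in S : \varphi(y) = w\}, \qquad w \in \FF_p.
\]
A change of variables shows that $\EE_y^* G(\varphi(y)) = \EE_w \theta(w) G(w)$ for any $G : \FF_p \to \CC$; taking $G(w) = \prod_i f_i(\ux + P_i(w) v_i)$ rewrites the left-hand side of Proposition \ref{prop_counting rational function progressions} as $\Lambda^\theta_k(f_0,\ldots,f_k)$. Moreover $|\theta| \le C_\varphi$ for a constant $C_\varphi$ depending only on $\deg \varphi$, so $\theta/C_\varphi$ is $1$-bounded, and $\EE_w \theta(w) = 1$ by construction.

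To verify that $\theta/C_\varphi$ is strongly $(d+1)$-uniform, fix $P \in \ZZ[y]$ of degree at most $d$. I would compute
\[
\EE_w \bigl(\theta(w) - 1\bigr) e_p(-P(w)) = \EE_y^* e_p\bigl(-P(\varphi(y))\bigr) - \EE_w e_p(-P(w)).
\]
For $P$ constant both terms agree; for $\deg P \ge 1$ the second term vanishes, while $P \circ \varphi$ is a non-constant rational function in $y$ of complexity bounded in terms of $d$ and $\varphi$, so Bombieri's bound in the form \cite[Proposition 2.2]{BC17} yields $|\EE_y^* e_p(-P(\varphi(y)))| = O_{\cP,\varphi}(p^{-1/2})$. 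Taking the supremum over such $P$ gives $\|\theta/C_\varphi - \EE(\theta/C_\varphi)\|_{u^{d+1}(\FF_p)} = O_{\cP,\varphi}(p^{-1/2})$, as required.

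Finally, applying Theorem \ref{thm_weighted counting polynomial progressions} to the weight $\theta/C_\varphi$, multiplying through by $C_\varphi$, and using $\EE_w \theta(w) = 1$ to simplify the main term to $\EE_\ux f_0(\ux) \prod_i \EE_{n_i} f_i(\ux + n_i v_i)$, produces exactly the claimed asymptotic. The only substantive step is the second one --- the exponential sum estimate for the pushforward weight --- and since it is handled by the black-box Bombieri bound already cited in the excerpt, there is no serious obstacle beyond identifying the correct weight and executing the substitution.
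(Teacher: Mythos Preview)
Your approach is exactly the one the paper intends: it does not spell out a proof of Proposition~\ref{prop_counting rational function progressions} but simply states that it is ``a direct application of Theorem~\ref{thm_weighted counting polynomial progressions} and Bombieri's bound,'' and your pushforward-weight construction is precisely how that application goes.

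One small correction: the claim that ``for $\deg P \ge 1$ the second term vanishes'' is not quite right. The complete exponential sum $\EE_{w\in\FF_p} e_p(-P(w))$ vanishes only when $\deg P = 1$; for $2 \le \deg P \le d$ it is a Weil sum of size $O_d(p^{-1/2})$, not zero. This does no damage, since $O_d(p^{-1/2})$ is exactly the bound you need, but the sentence should be adjusted accordingly. (Alternatively, one can absorb this term into the Bombieri bound, since a polynomial phase is a special case of a rational one.)

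A second minor point: your boundedness claim $|\theta|\le C_\varphi$ and the non-constancy of $P\circ\varphi$ both tacitly use that $\varphi$ is non-constant, whereas the hypothesis only says ``nonzero.'' This is clearly the intended reading (the proposition is false for constant $\varphi$), but it is worth flagging.
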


Proposition \ref{prop_counting rational function progressions} has the following corollary by applying \cite[Lemma 6.1]{K24a}.
\begin{corollary}
Let $\varphi(y) \in \QQ(y)$ be a nonzero rational function. Then there exist $p_0=p_0(\varphi,\mathcal{V}, \cP) \in \NN$, and $C=C(\varphi,\cP), c=c(\varphi,\cP)>0$ such that for any primes $p > p_0$, any $A \subset \FF^D_p$ with $|A|\geq Cp^{D-c}$ must contain a nontrivial configuration of the form
\[
\ux, \ux+ P_1(\varphi(y))v_1,\cdots, \ux+ P_k(\varphi(y))v_k.
\]
\end{corollary}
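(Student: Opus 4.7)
The plan is a standard density application of Proposition~\ref{prop_counting rational function progressions}, following the scheme used in \cite{K24b,K24a}. Set $f_0 = f_1 = \cdots = f_k = \mathbf{1}_A$ and write $\alpha := |A|/p^D$. After multiplying through by $p^{D+1}$, Proposition~\ref{prop_counting rational function progressions} rewrites the count $N(A)$ of tuples $(\ux,y)$, with $y$ not a pole of $\varphi$, such that $\ux, \ux+P_1(\varphi(y))v_1,\ldots,\ux+P_k(\varphi(y))v_k$ all lie in $A$, as
\[
N(A) = p^{D+1}\,\EE_{\ux\in\FF_p^D}\mathbf{1}_A(\ux)\prod_{i=1}^k \EE_{n_i\in\FF_p} \mathbf{1}_A(\ux+n_iv_i) + O_{\cP,\varphi}\!\left(p^{D+1-c_1}\right),
\]
for some $c_1 = c_1(\cP,\varphi) > 0$.

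The main term on the right is precisely the ``random-like'' box count that \cite[Lemma~6.1]{K24a} handles; invoking that lemma gives a lower bound of the form $c_2 \alpha^L p^{D+1}$, with $L = L(k,\mathcal{V})\in\NN$ and $c_2 = c_2(k,\mathcal{V}) > 0$ depending only on $k$ and the configuration of vectors. This step is purely combinatorial (iterated Cauchy--Schwarz on the box structure indexed by $v_1,\ldots,v_k$) and does not involve the weight or the polynomials $\cP$, so it applies verbatim in our setting.

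To finish, I subtract the trivial configurations, namely those $(\ux,y)$ with $P_i(\varphi(y))v_i = 0$ for every $i$. Since each $v_i$ is nonzero, this forces $\varphi(y)$ to be a common root of $P_1,\ldots,P_k$ in $\ol{\FF_p}$; as the $P_i$ are fixed nonzero polynomials, there are $O_{\cP}(1)$ such roots, and each has at most $O_\varphi(1)$ preimages under $\varphi$ in $\FF_p$, once $p$ is large enough that $\varphi \bmod p$ retains its degree. Hence the total trivial contribution is $O_{\cP,\varphi}(p^D)$. Choosing $c := c_1/(L+1)$ and $C$ sufficiently large, both depending on $\cP$ and $\varphi$, the hypothesis $|A|\geq Cp^{D-c}$ forces $c_2\alpha^L p^{D+1}$ to dominate both the error $p^{D+1-c_1}$ and the trivial count $p^D$, so $N(A)$ strictly exceeds the number of trivial tuples and a nontrivial configuration must exist. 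The only real verification needed is checking that \cite[Lemma~6.1]{K24a} applies in the vector setup $\mathcal{V}$ at hand; since that lemma is stated exactly for expressions of this form, I do not expect a genuine obstacle.
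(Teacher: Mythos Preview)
Your proposal is correct and follows exactly the approach the paper indicates: the paper itself gives no detailed proof, merely stating that the corollary follows from Proposition~\ref{prop_counting rational function progressions} ``by applying \cite[Lemma 6.1]{K24a}'', and you have filled in precisely those details (asymptotic count, lower bound on the main term via the cited lemma, and subtraction of the $O_{\cP,\varphi}(p^D)$ trivial configurations). One small remark: your constants $L$ and $c_2$ depend on $\mathcal{V}$, so the resulting $C,c$ inherit that dependence, which is slightly more than the paper's stated $C=C(\varphi,\cP)$, $c=c(\varphi,\cP)$; this is almost certainly an omission in the paper's statement rather than a flaw in your argument.
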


\subsection{Organization of the paper}
In Section \ref{Notation and preliminaries}, we provide some frequently used notation and establish the necessary background for the Gowers box norm and the relevant inverse theorem that will be used later. In Section \ref{Proof of thm_weighted polynomial multiple ergodic averages}, we give the proof of Theorem \ref{thm_weighted polynomial multiple ergodic averages}.

\subsection{Acknowledgements}
The author would like to thank his advisor, David Conlon, for his
encouragement and support.
The MOE Taiwan-Caltech Fellowship supported the author during the conduct of this research.

\section{Notation and preliminaries} \label{Notation and preliminaries}
\subsection{Notation and conventions}
Throughout the paper, we denote $p$ to be a sufficiently large prime. We use the Landau notation. We write $A=O(B)$ if there exists an absolute constant $C>0$ such that $|A| \leq C B$.
If the constant depends on other parameter, say $C=C(\mathcal{P})$, then we write $A=O_{\mathcal{P}}(B)$ to mean that $|A| \leq C(\mathcal{P}) B$.

We use $\NN$, $\ZZ$, $\QQ$, and $\FF_p$ to denote the set of positive integers, the set of integers, the set of rational numbers, and the finite field with $p$ elements, respectively.
For $x \in \FF_p$, we set
\[
e_p(x):= e^{2 \pi i x/p}.
\]
For any finite set $X$ and any function $f: X \rightarrow \CC$, we denote the average of $f$ over $X$ as
\[
\EE_{x \in X} f(x) := \frac{1}{|X|} \sum_{x\in X} f(x).
\]
Define the $\cL^p$-norm for $p \in (1,\infty)$ over $X$ as 
\[
\|f\|_{\cL^p(X)}:= \left( \EE_{x \in X} |f(x)|^p \right)^{1/p}.
\]
We say a function $f$ is 1-bounded if
\[
\sup_{x \in X} |f(x)| \leq 1.
\]
Let $f: \FF_p \rightarrow \CC$ be any function.
For $\ux \in \FF_p^D$, $v\in \ZZ^D$, and $\xi \in \FF_p$, we define the Fourier coefficient of $f$ along the direction $v$ as
\[
\widehat{f}(\ux;v;\xi) :=  \EE_{n \in \FF_p} f(\ux+nv) e_p(-n \xi),
\]
then we have the following Fourier inversion formula:
\[
f(\ux+nv)= \sum_{\xi \in \FF_p} \widehat{f}(\ux;v;\xi) e_p(n \xi),
\]
with Parseval's identity:
\[
\EE_{n \in \FF_p} \left| f(\ux+nv) \right|^2=
\sum_{\xi \in \FF_p} \left| \widehat{f}(\ux;v;\xi)  \right|^2.
\]
We record a simple observation about the size of the Fourier coefficients below that will be used later.
\begin{lemma} \label{lma_fourier coefficients}
    Let $v \in \ZZ^D$ be any nonzero vector. Then for any $\xi \in \FF_p$, we have
    \[
    \left|  \widehat{f}(\ux;v;\xi) \right| = \left|  \widehat{f}(\ux';v;\xi) \right|,
    \]
    if $\ux-\ux' \in \langle v \rangle$, where $ \langle v \rangle$ denotes the linear span of $v$ over $\FF_p$.
\end{lemma}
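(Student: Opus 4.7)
The statement expresses translation covariance of the Fourier coefficient along $v$: shifting the base point $\ux$ by any multiple of $v$ should only twist $\widehat f(\ux;v;\xi)$ by a unimodular phase. My plan is to realize this directly via a change of summation variable in the defining average, and then take absolute values to kill the resulting phase.

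Concretely, I would use the hypothesis $\ux - \ux' \in \langle v\rangle$ to write $\ux' = \ux + mv$ for some $m\in \FF_p$. Substituting into the definition of the Fourier coefficient yields
\[
\widehat f(\ux';v;\xi) = \EE_{n\in\FF_p} f\bigl(\ux + (n+m)v\bigr) e_p(-n\xi).
\]
Next I would apply the invertible change of variables $n \mapsto n - m$ on $\FF_p$, which preserves the normalized average over the finite group $\FF_p$, and factor out the resulting phase to obtain $\widehat f(\ux';v;\xi) = e_p(m\xi)\, \widehat f(\ux;v;\xi)$. Taking absolute values and using $|e_p(m\xi)|=1$ gives the claim.

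There is no real obstacle here: the only point worth checking is that we are shifting a sum over the full group $\FF_p$, so no boundary terms appear, which is automatic in the finite-field setting. The lemma is essentially the modulation-covariance property of the one-dimensional Fourier transform transcribed to $\FF_p$, restricted to the line $\ux + \langle v\rangle$.
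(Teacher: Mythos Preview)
Your proof is correct: the change of variable $n\mapsto n-m$ in the average over $\FF_p$ produces the unimodular phase $e_p(m\xi)$, which disappears upon taking absolute values. The paper does not supply a proof of this lemma, merely recording it as ``a simple observation,'' so your argument is exactly the natural one the reader is expected to fill in.
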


\subsection{Gowers box norm along vector subspaces}
Now let $D,s \in \NN$, and consider $\FF_p^D$ as the ambient space. For $\ux,\uh \in \FF_p^D$, define the multiplicative derivative
\[
\triangle_{\uh}f(\ux):=f(\ux)\overline{f}(\ux+\uh).
\]
For $\uh_1, \cdots, \uh_s \in \FF_p^D$, we write
\[
\triangle_{\uh_1, \cdots, \uh_s}f(\ux):=\triangle_{\uh_1} \cdots \triangle_{\uh_s}f(\ux).
\]
Given subspaces $H_1, \cdots, H_s \subset \FF_p^D$, we define the Gowers box norm of $f: \FF_p^D \rightarrow \CC$ along $H_1, \cdots, H_s$ as 
\begin{equation} \label{definition of Box norm}
    \|f\|_{H_1, \cdots, H_s}:= 
\left(
\EE_{\ux \in \FF_p^D} \EE_{\uh_1 \in H_1} \cdots \EE_{\uh_s \in H_s}
\triangle_{\uh_1, \cdots, \uh_s}f(\ux)
\right)^{1/2^s}.
\end{equation}
When $D=1$, and $H_1=\cdots=H_s=\FF_p$, \eqref{definition of Box norm} becomes the usual Gowers $U^s$-norm, and is denoted by $\|f\|_{U^s(\FF_p)}$.
For more properties of the Gowers box norm, we refer the readers to \cite[Section 2]{K24b}.

In this paper, we consider only the special case where $s=2$, and we record the inverse theorem for the following specific box norm:

\begin{lemma} \label{lma_U^2 inverse theorem}
    Let $f: \FF_p^D \rightarrow \CC $ be 1-bounded. Then for any nonzero vector $v \in \ZZ^D$, we have
    \[
    \|f\|^4_{\FF_p^D,\langle v \rangle} \leq \sup_{\xi \in \FF_p} \EE_{\ux \in \FF_p^D} \left| \widehat{f}(\ux;v;\xi) \right|^2.
    \]
\end{lemma}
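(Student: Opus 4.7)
The plan is to expand the Gowers box norm $\|f\|_{\FF_p^D, \langle v \rangle}$ directly from definition \eqref{definition of Box norm}, reduce it to a square-average of a linear correlation of $f$ along $v$, and then use Fourier analysis in the one-dimensional direction $v$ combined with Plancherel to bound it by $\sup_\xi \EE_\ux |\widehat{f}(\ux; v; \xi)|^2$.

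First I would unfold the definition with $s=2$, $H_1 = \FF_p^D$, and $H_2 = \langle v \rangle$ (so that $\uh_2 = nv$ with $n \in \FF_p$), obtaining
\[
\|f\|^4_{\FF_p^D, \langle v \rangle} = \EE_{\ux, \uh_1 \in \FF_p^D} \EE_{n \in \FF_p} f(\ux) \overline{f(\ux + \uh_1)} \overline{f(\ux + nv)} f(\ux + \uh_1 + nv).
\]
Substituting $\by := \ux + \uh_1$ (so that $\by$ ranges over $\FF_p^D$ independently of $\ux$) decouples the two pairs of factors and collapses the expression to
\[
\|f\|^4_{\FF_p^D, \langle v \rangle} = \EE_{n \in \FF_p} \left| \EE_{\ux \in \FF_p^D} f(\ux) \overline{f(\ux + nv)} \right|^2.
\]

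Next, using the translation invariance $\EE_\ux g(\ux) = \EE_\ux g(\ux + mv)$ and averaging in $m \in \FF_p$, I would rewrite
\[
\EE_\ux f(\ux) \overline{f(\ux + nv)} = \EE_\ux \EE_{m \in \FF_p} f(\ux + mv) \overline{f(\ux + (m+n)v)},
\]
and then apply the Fourier inversion formula $f(\ux + mv) = \sum_\xi \widehat{f}(\ux; v; \xi) e_p(m\xi)$ inside the $m$-average. Orthogonality of characters collapses the resulting double sum to the diagonal, yielding
\[
\EE_\ux f(\ux) \overline{f(\ux + nv)} = \sum_{\xi \in \FF_p} c_\xi \, e_p(-n\xi), \qquad c_\xi := \EE_\ux |\widehat{f}(\ux; v; \xi)|^2.
\]
A second application of Plancherel, this time in $n$, then gives
\[
\|f\|^4_{\FF_p^D, \langle v \rangle} = \sum_{\xi \in \FF_p} c_\xi^2.
\]

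Finally, I would combine Parseval's identity with the $1$-boundedness of $f$ to obtain $\sum_{\xi} c_\xi = \EE_\ux \EE_n |f(\ux + nv)|^2 \leq 1$, and then bound one copy of $c_\xi$ by its supremum:
\[
\sum_{\xi} c_\xi^2 \leq \left(\sup_\xi c_\xi\right) \sum_\xi c_\xi \leq \sup_\xi \EE_\ux |\widehat{f}(\ux; v; \xi)|^2.
\]
I do not anticipate any real obstacle: this is a standard $U^2$-type inverse estimate for the box norm with one of its subspaces taken to be the line $\langle v \rangle$, and the only mild care required is to Fourier-expand along the one-dimensional direction $v$ (rather than along all of $\FF_p^D$) before invoking Plancherel.
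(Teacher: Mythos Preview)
Your proof is correct and follows essentially the same route as the paper: both arguments Fourier-expand along the direction $v$, arrive at the identity $\|f\|^4_{\FF_p^D,\langle v\rangle}=\sum_{\xi} c_\xi^2$ with $c_\xi=\EE_{\ux}|\widehat f(\ux;v;\xi)|^2$, and finish via $\sum_\xi c_\xi^2\le(\sup_\xi c_\xi)\sum_\xi c_\xi\le\sup_\xi c_\xi$. The only organizational difference is that the paper splits $\FF_p^D=\langle v\rangle^\perp\oplus\langle v\rangle$ and then invokes Lemma~\ref{lma_fourier coefficients} to pass from $\EE_{\ux'\in\langle v\rangle^\perp}$ to $\EE_{\ux\in\FF_p^D}$, whereas you bypass both by inserting an extra translation average $\EE_{m}$ at the outset; your packaging is slightly cleaner but not genuinely different.
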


\begin{proof} [Proof of Lemma \ref{lma_U^2 inverse theorem}]
    For $\ux, \uh \in \FF_p^D$, we write
    \[
    \ux=\ux'+x_v v \in \langle v \rangle^{\perp} \oplus \langle v \rangle, \quad \text{and} \quad \uh=\uh'+h_v v \in \langle v \rangle^{\perp} \oplus \langle v \rangle,
    \]
    for some $x_v, h_v \in \FF_p$.
    Then we have
    \[
    \|f\|^4_{\FF_p^D,\langle v \rangle}= \EE_{\ux, \uh \in \FF_p^D} \EE_{n\in\FF_p} \triangle_{\uh, nv}f(\ux)
    \]
    \[
    = \EE_{\ux', \uh' \in \langle v \rangle^{\perp}} \EE_{x_v, h_v, n \in \FF_p} f(\ux'+x_v v) \overline{f}(\ux'+\uh'+(x_v+h_v)v) 
    \]
    \[
    \times \overline{f}(\ux'+(x_v+n)v) f(\ux'+\uh'+(x_v+h_v+n)v)
    \]
    \[
    = \EE_{\ux', \uh' \in \langle v \rangle^{\perp}}  \sum_{\xi \in \FF_p}
    \left| \widehat{f}(\ux';v;\xi)  \right|^2 
    \left| \widehat{f}(\ux'+\uh';v;\xi)  \right|^2 
    \]
    \[
    =\sum_{\xi \in \FF_p} \left(  \EE_{\ux' \in \langle v \rangle^{\perp}} \left| \widehat{f}(\ux';v;\xi)  \right|^2 \right)^2
    \]
    \[
    \leq \left( \sum_{\xi \in \FF_p} \EE_{\ux' \in \langle v \rangle^{\perp}} \left| \widehat{f}(\ux';v;\xi)  \right|^2 \right)
    \left( \sup_{\xi \in \FF_p}  \EE_{\ux' \in \langle v \rangle^{\perp}} \left| \widehat{f}(\ux';v;\xi)  \right|^2 \right).
    \]
    For the first term, by Parseval's identity, we have
    \[
    \sum_{\xi \in \FF_p} \EE_{\ux' \in \langle v \rangle^{\perp}} \left| \widehat{f}(\ux';v;\xi)  \right|^2 =  \EE_{\ux' \in \langle v \rangle^{\perp}} \sum_{\xi \in \FF_p} \left| \widehat{f}(\ux';v;\xi)  \right|^2
    \]
    \[
    = \EE_{\ux' \in \langle v \rangle^{\perp}} \EE_{n \in \FF_p}
    \left| f(\ux'+nv)  \right|^2 \leq 1.
    \]
    For the second term, by Lemma \ref{lma_fourier coefficients}, we have
    \[
    \EE_{\ux' \in \langle v \rangle^{\perp}} \left| \widehat{f}(\ux';v;\xi)  \right|^2 = \EE_{\ux' \in \langle v \rangle^{\perp}} \left| \widehat{f}(\ux'+nv;v;\xi)  \right|^2,
    \]
    for any $n \in \FF_p$. Therefore, we have
    \[
    \EE_{\ux' \in \langle v \rangle^{\perp}} \left| \widehat{f}(\ux';v;\xi)  \right|^2 = \EE_{n \in \FF_p} \EE_{\ux' \in \langle v \rangle^{\perp}} \left| \widehat{f}(\ux'+nv;v;\xi)  \right|^2 
    \]
    \[
    =\EE_{\ux \in \FF_p^D} \left| \widehat{f}(\ux;v;\xi)  \right|^2,
    \]
    and the proof is complete.
\end{proof}

\section{Proof of Theorem \ref{thm_weighted polynomial multiple ergodic averages}} \label{Proof of thm_weighted polynomial multiple ergodic averages}
To avoid further confusion, we state the unweighted case of Theorem \ref{thm_weighted counting polynomial progressions} here for the reader's convenience, and we will use this result as a black box.
\begin{theorem} \label{thm_counting polynomial progressions}(\cite{K24b})
    There exist $p_0=p_0(\mathcal{V}, \cP) \in \NN$, and $c=c(\cP)>0$ such that for any primes $p > p_0$, we have
    \[
     \Lambda^1_{k}(f_0,...,f_k)
     = \EE_{\underline{x}\in \FF^D_p}  f_0(\underline{x}) \prod_{i=1}^k 
     \EE_{n_i\in \FF_p}f_i( \underline{x}+ n_iv_i)
    + O_{\cP}\left(p^{-c} \right),
    \]
    for any 1-bounded functions $f_0, \cdots, f_k: \FF_p^D \rightarrow \CC $.
\end{theorem}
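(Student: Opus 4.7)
The plan is to follow the finitary PET (polynomial exhaustion technique) strategy pioneered by Peluse \cite{P19} in the one-dimensional case and adapted to the multidimensional setting by Kuca \cite{K24b}. The overarching goal is to prove, by an induction on $i = k, k-1, \dots, 1$, that each factor $f_i(\ux + P_i(y) v_i)$ may be replaced by its $v_i$-averaged version $\EE_{n_i \in \FF_p} f_i(\ux + n_i v_i)$ at a cost of $O(p^{-c})$. The result of peeling off all $k$ factors this way is precisely the claimed main term.

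The first key step is a box-norm control statement of the form
\[
| \Lambda^1_k(f_0, \dots, f_{k-1}, f_k) | \ll_{\cP} \|f_k\|_{H_1,\dots,H_s}^{\alpha} + p^{-c'},
\]
where the subspaces $H_1, \dots, H_s \subset \FF_p^D$ can be arranged to include $\langle v_k \rangle$, $s$ depends on $(\cP, \cV)$, and $\alpha > 0$ is an explicit exponent. To obtain this, I would apply Cauchy--Schwarz in the $y$-variable repeatedly, combined with van der Corput/PET differencings. Each differencing lowers the maximum degree appearing in the configuration at the price of introducing an extra averaged difference variable, and after a controlled number of steps the remaining multilinear form is degenerate enough to be dominated directly by a Gowers box norm of $f_k$ along directions related to $v_k$ and the derivatives of the $P_i$'s.

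The second step is a degree-lowering argument. Lemma \ref{lma_U^2 inverse theorem} (and its higher-order analogues for larger $s$) converts box-norm smallness into a statement that a certain Fourier coefficient $\widehat{f_k}(\cdot; v_k; \xi)$ is large; substituting the resulting structured component back into $\Lambda^1_k$ either produces cancellation or reduces to a configuration of strictly lower PET-complexity, where one can iterate. Continuing until the structural data is trivial, one concludes that the part of $f_k$ orthogonal to the $\langle v_k \rangle$-averaged function $\tilde f_k(\ux) := \EE_{n_k \in \FF_p} f_k(\ux + n_k v_k)$ contributes only $O(p^{-c})$. Since $\tilde f_k$ is $\langle v_k \rangle$-invariant, the factor $\tilde f_k(\ux + P_k(y)v_k) = \tilde f_k(\ux)$ becomes independent of $y$, and the same PET argument can now be run to peel off $f_{k-1}$ along $v_{k-1}$, then $f_{k-2}$ along $v_{k-2}$, down to $f_1$.

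The main obstacle is making the PET induction actually close in the multidimensional regime. In the one-dimensional case, ordering the polynomials by degree is essentially enough to drive the induction, but when $D > 1$ the differencings intertwine the vectors $v_1, \dots, v_k$ with derivatives of $P_1, \dots, P_k$, and hidden linear relations between difference vectors can appear. Producing a well-defined lexicographic weight on the pair $(\cP, \cV)$ which strictly decreases under every admissible differencing swap, and showing that the induction terminates in a configuration where the remaining multilinear form is controlled by a box norm along a direction adapted to $\langle v_k \rangle$, is the technical heart of Kuca's argument and the step I expect to be the genuinely difficult part of any attempted reproof.
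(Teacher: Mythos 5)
You should first note that the paper does not prove this statement at all: Theorem \ref{thm_counting polynomial progressions} is quoted verbatim from Kuca \cite{K24b} and explicitly used ``as a black box,'' so the only honest comparison is against Kuca's argument, not against anything in this manuscript. Your proposal correctly identifies the strategy behind that argument (iterated Cauchy--Schwarz/PET to get control by a Gowers box norm along subspaces such as $\langle v_k\rangle$, then a degree-lowering iteration in the spirit of Peluse \cite{P19}, then peeling off $f_k$ via its $\langle v_k\rangle$-average and inducting on $k$), and the final reduction you describe --- replacing $f_k$ by $\tilde f_k$, noting $\tilde f_k(\ux+P_k(y)v_k)=\tilde f_k(\ux)$, and recursing --- is indeed how the main term arises.

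However, as a proof it has a genuine gap: both load-bearing components are only described, not established. You state the box-norm control inequality as a target and acknowledge that producing a PET weight on $(\cP,\cV)$ that strictly decreases under the admissible differencings, and showing the induction terminates in a form dominated by a box norm adapted to $\langle v_k\rangle$, is ``the technical heart'' you have not carried out; without it the exponent $s$, the subspaces $H_1,\dots,H_s$, and the exponent $\alpha$ are unspecified and the inequality is unverified. The degree-lowering step is likewise only gestured at, and as sketched it is slightly miscalibrated: the inverse theorem (e.g.\ Lemma \ref{lma_U^2 inverse theorem} and its higher analogues) converts box-norm \emph{largeness} into a large directional Fourier coefficient $\widehat{f_k}(\ux;v_k;\xi)$ whose frequency $\xi$ a priori depends on $\ux$; the delicate point in Peluse's and Kuca's arguments is removing this $\ux$-dependence (via dual functions and an appeal to the already-proved lower-complexity counting statement) before one can ``substitute the structured component back in,'' and your outline does not address this. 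So the proposal is a faithful roadmap of the known proof, but essentially all of its technical content --- precisely the content for which the paper defers to \cite{K24b} --- is missing.
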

Throughout the remainder of this section, let $l\in \NN$ with $l \leq k$.
Define
\[
G^{\theta}_{l,k}(\ux):= \EE_{y \in \FF_p} \theta(y) \prod_{i=1}^l f_i(\ux+P_i(y)v_i) \prod_{i=l+1}^k e_p(P_i(y)\xi_i),
\]
where $\xi_{l+1},\cdots, \xi_k \in \FF_p$. When $l=k$, we simply write $G^{\theta}_{k}=G^{\theta}_{l,k}$. Considering these more general weighted averaging operators plays a crucial role when we run an induction argument later. The most essential part of this paper is the following result, which concerns the control of the weighted averaging operators in terms of $u^s$-norm:
\begin{proposition} \label{prop_u^d control}
    There exist $p_0=p_0(\mathcal{V}, \cP) \in \NN$, and $c_l=c_l(\cP)>0$ such that for any primes $p > p_0$, we have
    \[
    \|G^{\theta}_{l,k} \|_{\cL^2(\FF_p^D)}^{O_{\cP}(1)}
    \leq \| \theta \|_{u^{d+1}(\FF_p)} + O_{\cP}\left(p^{-c_l} \right),
    \]
    for any 1-bounded functions $f_1, \cdots, f_l: \FF_p^D \rightarrow \CC $, any 1-bounded function $\theta: \FF_p \rightarrow \CC$, and any $\xi_{l+1},\cdots, \xi_k \in \FF_p$.
\end{proposition}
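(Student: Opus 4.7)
The approach is to prove Proposition~\ref{prop_u^d control} by induction on $l$ from $l=0$ up to $l=k$. The base case $l=0$ is essentially the definition of the $u^{d+1}$-norm: the operator $G^{\theta}_{0,k}(\ux)=\EE_{y}\theta(y)\prod_{i=1}^{k}e_p(P_i(y)\xi_i)=\EE_{y}\theta(y)e_p(Q(y))$ is independent of $\ux$, where $Q(y)=\sum_{i=1}^{k}P_i(y)\xi_i$ is a polynomial (lifted to $\ZZ[y]$) of degree at most $d$. Hence $\|G^{\theta}_{0,k}\|_{\cL^2(\FF_p^D)}=|G^{\theta}_{0,k}|\leq\|\theta\|_{u^{d+1}(\FF_p)}$, so the base case holds with any positive exponent in place of $O_{\cP}(1)$ and any positive $c_0$.

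\textbf{Inductive step.} Assume the bound at level $l-1$. Write
\[
G^{\theta}_{l,k}(\ux)=\EE_{y}\theta(y)\,f_l(\ux+P_l(y)v_l)\,H(\ux,y),\quad H(\ux,y)=\prod_{i<l}f_i(\ux+P_i(y)v_i)\prod_{i>l}e_p(P_i(y)\xi_i).
\]
The key idea is to trade the function $f_l$ for an exponential phase $e_p(P_l(y)\xi)$; crucially, $\deg P_l\leq d$, so the resulting operator still lives in the $u^{d+1}$-norm regime. The Fourier expansion along $\langle v_l\rangle$ gives $f_l(\ux+P_l(y)v_l)=\sum_{\xi\in\FF_p}\widehat{f}_l(\ux;v_l;\xi)\,e_p(P_l(y)\xi)$, whence $G^{\theta}_{l,k}(\ux)=\sum_{\xi}\widehat{f}_l(\ux;v_l;\xi)\,G^{\theta,\xi}_{l-1,k}(\ux)$, where $G^{\theta,\xi}_{l-1,k}$ is a weighted averaging operator with $l-1$ genuine functions and $P_l(y)\xi$ adjoined to the phase collection.

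\textbf{Concentration on a dominant Fourier mode.} Summing over $\xi$ naively costs a factor of $p$, so the plan is to first square $\|G^{\theta}_{l,k}\|_{\cL^2}$ and apply Cauchy--Schwarz (after a change of variables in $\ux$) to rewrite it as a box-norm-type correlation of $f_l$ along $\langle v_l\rangle$. Lemma~\ref{lma_U^2 inverse theorem} then bounds this correlation by $\sup_{\xi^*\in\FF_p}\EE_{\ux}|\widehat{f}_l(\ux;v_l;\xi^*)|^2$, effectively replacing $f_l$ by the single character $e_p(P_l(y)\xi^*)$. Matters are thereby reduced to bounding $\|G^{\theta,\xi^*}_{l-1,k}\|_{\cL^2}^{O_{\cP}(1)}$, and the inductive hypothesis yields the bound $\|\theta\|_{u^{d+1}(\FF_p)}+O_{\cP}(p^{-c_{l-1}})$.

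\textbf{Main obstacle and error handling.} The principal difficulty is the geometric misalignment of the $v_i$: after the natural shift $\ux\mapsto\ux-P_l(y)v_l$ that centers $f_l$, the factors $f_i(\ux+P_i(y)v_i)$ for $i<l$ become $f_i(\ux+P_i(y)v_i-P_l(y)v_l)$, with shifts no longer lying in $\langle v_l\rangle$. Handling this correctly requires careful bookkeeping of the box-norm directions in Lemma~\ref{lma_U^2 inverse theorem} and, likely, several rounds of Cauchy--Schwarz per inductive step, which accounts for the exponent $O_{\cP}(1)$ on the left-hand side. Residual error terms arising from the pigeonholing step and from reducing the other $f_i$-correlations to pure constant shifts are controlled quantitatively by the unweighted Theorem~\ref{thm_counting polynomial progressions}, producing the claimed error $O_{\cP}(p^{-c_l})$ and closing the induction.
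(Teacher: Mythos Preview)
Your overall architecture---induction on $l$, with the base case reducing directly to the definition of the $u^{d+1}$-norm---matches the paper (the paper starts at $l=1$ via Parseval, but your $l=0$ base case is equally valid). The gap is in the inductive step, specifically in the ``concentration on a dominant Fourier mode'' paragraph.

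You propose to apply Lemma~\ref{lma_U^2 inverse theorem} to $f_l$, obtaining a bound of the form $\sup_{\xi^*}\EE_{\ux}|\widehat f_l(\ux;v_l;\xi^*)|^2$, and then assert that this ``effectively replaces $f_l$ by $e_p(P_l(y)\xi^*)$'' and reduces matters to $\|G^{\theta,\xi^*}_{l-1,k}\|_{\cL^2}$. But that bound is just a number between $0$ and $1$; it contains no reference to $\theta$ or to the remaining functions, so nothing forces the operator $G^{\theta,\xi^*}_{l-1,k}$ to reappear. Concretely, the inequality $\|G^\theta_{l,k}\|_{\cL^2}^{O(1)}\le\|f_l\|_{\FF_p^D,\langle v_l\rangle}^{O(1)}+O(p^{-c})$ is false (take $f_l\equiv 1$), and even if one had an inequality shaped like $\|G^\theta_{l,k}\|_{\cL^2}^{O(1)}\le\EE_{\ux}|\widehat f_l(\ux;v_l;\xi^*)|^2\cdot(\text{something})$, there is no mechanism in your outline that makes the ``something'' equal to $\|G^{\theta,\xi^*}_{l-1,k}\|_{\cL^2}^{O(1)}$: after the shift $\ux\mapsto\ux-P_l(y)v_l$ the other factors $f_i$ are genuinely entangled with the $v_l$-direction, as you yourself note, and several rounds of Cauchy--Schwarz will not disentangle them into a clean $l-1$ operator.

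The paper resolves this by applying Lemma~\ref{lma_U^2 inverse theorem} not to $f_l$ but to the \emph{dual function}
\[
F^{\theta}_{l,k}(\ux)=\EE_{y,y'}\theta(y)\overline{\theta}(y')\,f_l(\ux+(P_l(y)-P_l(y'))v_l)\prod_{i<l}f_i(\cdots)\overline{f_i}(\cdots)\prod_{i>l}e_p(\cdots),
\]
which already carries the $\theta$-weight. The key technical lemma (Lemma~\ref{lma_Gowers norm control}) is the box-norm control $\|G^{\theta}_{l,k}\|_{\cL^2}^{O(1)}\le\|F^{\theta}_{l,k}\|_{\FF_p^D,\langle v_l\rangle}+O(p^{-c})$, and here Theorem~\ref{thm_counting polynomial progressions} is used in an essential way (twice, to decouple the $y$- and $y'$-averages into independent $n_i$-averages), not merely to clean up residual errors. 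Only then does the inverse theorem give $\sup_{\xi_l}\EE_{\ux}|\widehat{F^{\theta}_{l,k}}(\ux;v_l;\xi_l)|^2$; computing this Fourier coefficient and applying one Cauchy--Schwarz in $(y,n)$ separates out a factor that is precisely $|G^{\theta}_{l-1,k}(\ux)|^2$ with the new phase $e_p(-P_l(y')\xi_l)$ adjoined, closing the induction cleanly. The object to degree-lower is the dual function, not $f_l$.
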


We postpone the proof of Proposition \ref{prop_u^d control} until the end of this section, and we first start with the proof of Theorem \ref{thm_weighted polynomial multiple ergodic averages}:

\begin{proof}[Proof of Theorem \ref{thm_weighted polynomial multiple ergodic averages} assuming Proposition \ref{prop_u^d control}]
If we set
\[
G^{\EE \theta}(\ux):=\EE_{y \in \FF_p} \theta(y) \cdot
      \prod_{i=1}^k 
     \EE_{n_i\in \FF_p}f_i( \underline{x}+ n_iv_i),
\]
then we have
\[
     \EE_{\ux \in \FF_p^D}
     \left|
    G^{\theta}_{k}(\ux)- \EE_{y \in \FF_p} \theta(y) \cdot
      \prod_{i=1}^k 
     \EE_{n_i\in \FF_p}f_i( \underline{x}+ n_iv_i)
     \right|^2
     =\EE_{\ux \in \FF_p^D}
     \left|
    G^{\theta}_{k}(\ux)- G^{\EE \theta}(\ux)
     \right|^2
\]
\begin{equation} \label{1_proof of weighted polynomial multiple ergodic averages}
    =\langle   G^{\theta}_{k},  G^{\theta}_{k}  \rangle
-\langle  G^{\theta}_{k} , G^{\EE \theta} \rangle
-\langle G^{\EE \theta} ,  G^{\theta}_{k} \rangle
+\langle G^{\EE \theta}, G^{\EE \theta} \rangle,
\end{equation}
where we denote $\langle f,g \rangle := \EE_{\ux \in \FF_p^D} f(\ux) \overline{g}(\ux)$.

Next, we write
\[
    G^{\theta}_{k}=G^{\theta-\EE \theta}_{k}+G^{\EE \theta}_{k}.
\]
Then by Proposition \ref{prop_u^d control} and the Cauchy-Schwarz inequality, there exists $c_1=c_1(\cP)>0$ such that
\[
\left|  \langle   G^{\theta-\EE \theta}_{k},  f  \rangle  \right| ^{O_{\cP}(1)}
=  O_{\cP}\left(\| \theta-\EE \theta \|_{u^{d+1}(\FF_p)}+p^{-c_1} \right),
\]
for any 1-bounded function $f$. And from the assumption that $\theta$ is strongly $(d+1)$-uniform, there exists $c_2=c_2(\cP,\theta)>0$ such that
\[
\|\theta-\EE \theta\|_{u^{d+1}(\FF_p)} =O_{\theta}\left( p^{-c_2} \right).
\]
To conclude, we have shown that 
\[
\langle   G^{\theta}_{k},  G^{\theta}_{k}  \rangle
-\langle  G^{\theta}_{k} , G^{\EE \theta} \rangle
-\langle G^{\EE \theta} ,  G^{\theta}_{k} \rangle
+\langle G^{\EE \theta}, G^{\EE \theta} \rangle
\]
\begin{equation} \label{2_proof of weighted polynomial multiple ergodic averages}
    =\langle   G^{\EE \theta}_{k},  G^{\EE \theta}_{k}  \rangle
-\langle  G^{\EE \theta}_{k} , G^{\EE \theta} \rangle
-\langle G^{\EE \theta} ,  G^{\EE \theta}_{k} \rangle
+\langle G^{\EE \theta}, G^{\EE \theta} \rangle
+O_{\cP,\theta}\left( p^{-c_3} \right),
\end{equation}
for some $c_3=c_3(\cP,\theta)$.

Finally, by Theorem \ref{thm_counting polynomial progressions}, there exists 
$c_4=c_4(\cP,\theta)>0$ such that
\[
 \langle G^{\EE \theta}_{k},  f \rangle= \langle G^{\EE \theta},  f \rangle
 +O_{\cP, \theta}\left( p^{-c_4} \right),
\]
for any 1-bounded function $f$. Therefore, we have
\[
\langle   G^{\EE \theta}_{k},  G^{\EE \theta}_{k}  \rangle
-\langle  G^{\EE \theta}_{k} , G^{\EE \theta} \rangle
-\langle G^{\EE \theta} ,  G^{\EE \theta}_{k} \rangle
+\langle G^{\EE \theta}, G^{\EE \theta} \rangle
\]
\[
=\langle   G^{\EE \theta},  G^{\EE \theta}  \rangle
-\langle  G^{\EE \theta} , G^{\EE \theta} \rangle
-\langle G^{\EE \theta} ,  G^{\EE \theta} \rangle
+\langle G^{\EE \theta}, G^{\EE \theta} \rangle
+O_{\cP, \theta}\left( p^{-c_4} \right)
\]
\begin{equation} \label{3_proof of weighted polynomial multiple ergodic averages}
    =O_{\cP, \theta}\left( p^{-c_4} \right).
\end{equation}
Combining \eqref{1_proof of weighted polynomial multiple ergodic averages}, \eqref{2_proof of weighted polynomial multiple ergodic averages}, and \eqref{3_proof of weighted polynomial multiple ergodic averages} concludes the proof.

\end{proof}

It remains to prove Proposition \ref{prop_u^d control}. The strategy of the proof consists of two steps: a suitable box norm control and a degree-lowering procedure for the relevant box norm.
This framework dates back to Peluse \cite{P19} in the one-dimensional case, and was later generalized to the higher-dimensional case by Kuca \cite{K24b} and \cite{K24a}. The details differ in that different box norm controls require different degree-lowering procedures, and we extend this framework to our current setting.

Before stating the proof, we first define the following version of the "dual functions," which are the central objects on which we will perform the degree-lowering procedure later:
\[
F^{\theta}_{l,k}(\ux):= \EE_{y,y' \in \FF_p} \theta(y)\overline{\theta}(y')
\left(  \prod_{i=1}^{l-1} f_i(\ux+P_i(y)v_i-P_l(y')v_l) \overline{f_i}(\ux+P_i(y')v_i-P_l(y')v_l) \right)
\]
\[
\times f_l(\ux+P_l(y)v_l-P_l(y')v_l) 
\left(  \prod_{i=l+1}^k e_p\left((P_i(y)-P_i(y')) \xi_i \right) \right).
\]

The following lemma concerns the suitable box norm controls of the weighted averaging operators in terms of the dual functions, which is the first step of our analysis:
\begin{lemma} \label{lma_Gowers norm control}
    There exist $p_0=p_0(\mathcal{V}, \cP) \in \NN$, and $c_l=c_l(\cP)>0$ such that for any primes $p > p_0$, we have
    \[
    \|G^{\theta}_{l,k} \|_{\cL^2(\FF_p^D)}^{O(1)} \leq 
    \|F^{\theta}_{l,k}\|_{\FF_p^D,\langle v_l \rangle} +O(p^{-c_l}),
    \]
    for any 1-bounded functions $f_1, \cdots, f_l: \FF_p^D \rightarrow \CC $, any 1-bounded function $\theta: \FF_p \rightarrow \CC$, and any $\xi_{l+1},\cdots, \xi_k \in \FF_p$.
\end{lemma}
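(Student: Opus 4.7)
The plan is to follow a standard PET-style scheme: expand the $\cL^2$-norm square of $G^{\theta}_{l,k}$, perform a substitution that isolates the $l$-th function $\overline{f_l}$, and then apply Cauchy-Schwarz twice in the $\langle v_l \rangle$ direction to produce the box norm.

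First, I expand $\|G^{\theta}_{l,k}\|^2_{\cL^2(\FF_p^D)}$ as a double average over $y, y' \in \FF_p$:
\[
\EE_{\ux} \EE_{y, y'} \theta(y) \overline{\theta(y')} \prod_{i=1}^l f_i(\ux+P_i(y)v_i) \overline{f_i(\ux+P_i(y')v_i)} \prod_{i=l+1}^k e_p((P_i(y)-P_i(y'))\xi_i),
\]
and, for each fixed $(y, y')$, translate $\ux \mapsto \ux - P_l(y')v_l$ inside the $\EE_{\ux}$. The $\overline{f_l}$-factor collapses to the $y'$-independent $\overline{f_l}(\ux)$, while the remaining factors reassemble precisely into $F^{\theta}_{l,k}(\ux)$; this gives the identity
\[
\|G^{\theta}_{l,k}\|^2_{\cL^2(\FF_p^D)} = \EE_{\ux} \overline{f_l}(\ux) F^{\theta}_{l,k}(\ux).
\]

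Next, I write $\ux = \ux' + xv_l$ with $\ux' \in \langle v_l \rangle^{\perp}$ and apply Cauchy-Schwarz in $\ux'$ with the inner $x$-average. Expanding the square and introducing the shift $n := x - x'$ yields
\[
\|G^{\theta}_{l,k}\|^4_{\cL^2(\FF_p^D)} \leq \EE_{\ux \in \FF_p^D,\, n \in \FF_p} [f_l(\ux) \overline{f_l}(\ux+nv_l)] \cdot [\overline{F^{\theta}_{l,k}}(\ux) F^{\theta}_{l,k}(\ux+nv_l)].
\]
Setting $\eta := f_l \cdot \overline{F^{\theta}_{l,k}}$, the integrand factors as $\eta(\ux) \overline{\eta}(\ux+nv_l)$. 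A further Cauchy-Schwarz in $n$, combined with the Parseval-type identity used in the proof of Lemma \ref{lma_U^2 inverse theorem}, then yields
\[
\|G^{\theta}_{l,k}\|^8_{\cL^2(\FF_p^D)} \leq \|f_l \cdot \overline{F^{\theta}_{l,k}}\|^4_{\FF_p^D, \langle v_l \rangle}.
\]

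The main obstacle is the final passage from $\|f_l \cdot \overline{F^{\theta}_{l,k}}\|_{\FF_p^D, \langle v_l \rangle}$ to $\|F^{\theta}_{l,k}\|_{\FF_p^D, \langle v_l \rangle}$ at the cost of only a polynomial-decay error $O(p^{-c_l})$. To handle this, I would expand the box norm of $\eta$ via its definition, factor the integrand as the product of the box-norm integrand of the $1$-bounded $f_l$ with that of $\overline{F^{\theta}_{l,k}}$, and apply one more Cauchy-Schwarz that decouples the $f_l$-factors while retaining the box-norm structure on $F^{\theta}_{l,k}$. The polynomial-decay error $O(p^{-c_l})$ is accumulated from Bombieri--Weil-type estimates on the free exponential phases $e_p((P_i(y)-P_i(y'))\xi_i)$ for $i > l$ whose oscillation survives the box-norm expansion, and the flexibility in the exponent $O_{\cP}(1)$ absorbs the doublings incurred by the iterated Cauchy-Schwarz applications.
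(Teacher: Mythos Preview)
Your argument is clean and correct up through the bound $\|G^{\theta}_{l,k}\|^8_{\cL^2} \leq \|f_l\cdot\overline{F^{\theta}_{l,k}}\|^4_{\FF_p^D,\langle v_l\rangle}$. The gap is the last step, and it is a real one. A further Cauchy--Schwarz cannot, in general, pass from $\|f_l\cdot\overline{F}\|_{\FF_p^D,\langle v_l\rangle}$ to $\|F\|_{\FF_p^D,\langle v_l\rangle}$ for arbitrary $1$-bounded $f_l$ and $F$: already when $D=1$ and $v_l=1$ (so the box norm is $U^2$), the choice $f_l=F=e_p(x^2)$ gives $f_l\overline{F}\equiv 1$ with box norm $1$, while $\|F\|_{U^2}=p^{-1/4}$. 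Your sketch treats $F^{\theta}_{l,k}$ as a generic function at that point, so nothing rules this out. Moreover, your proposed source for the $O(p^{-c_l})$ error is not right: the phases $e_p((P_i(y)-P_i(y'))\xi_i)$ are absorbed into $F^{\theta}_{l,k}$ and never appear loose after you form the box norm of $f_l\overline{F}$, so there is no place for a Weil-type bound to enter.

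The paper takes a different route after the identity $\|G^{\theta}_{l,k}\|^2=\EE_{\ux} F^{\theta}_{l,k}(\ux)\overline{f_l}(\ux)$. It first Cauchy--Schwarzes to $\|F^{\theta}_{l,k}\|^2_{\cL^2}$, then expands this and applies Cauchy--Schwarz in the $(y,y')$ variables, which strips off $\theta$ and the exponential factors and leaves an unweighted polynomial average of derivatives $\triangle_{\uh}f_i$ and $\triangle_{\uh}F^{\theta}_{l,k}$. The crucial input---which your sketch never invokes---is Theorem~\ref{thm_counting polynomial progressions} (Kuca's unweighted asymptotic), applied twice to replace the polynomial averages $\EE_y \prod_i g_i(\ux+P_i(y)v_i)$ by decoupled averages $\prod_i\EE_{n_i} g_i(\ux+n_iv_i)$. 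That black box is exactly where the $O(p^{-c_l})$ error comes from, and after it a final Cauchy--Schwarz in the $\langle v_l\rangle$ direction isolates $\|F^{\theta}_{l,k}\|_{\FF_p^D,\langle v_l\rangle}$ cleanly, with no $f_l$ attached.
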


\begin{proof} [Proof of Lemma \ref{lma_Gowers norm control}]
    From the definition of $G^{\theta}_{l,k}$ and $F^{\theta}_{l,k}$, we have
    \[
    \|G^{\theta}_{l,k} \|_{\cL^2(\FF_p^D)}^2= \EE_{\ux \in \FF_p^D} F^{\theta}_{l,k}(\ux) \overline{f_l} (\ux).
    \]
    Applying the Cauchy-Schwarz inequality to the identity above, we have
    \[
    \|G^{\theta}_{l,k} \|_{\cL^2(\FF_p^D)}^4 \leq \|F^{\theta}_{l,k} \|_{\cL^2(\FF_p^D)}^2
    \]
    \[
= \EE_{\ux \in \FF_p^D}\EE_{y,y' \in \FF_p} \theta(y)\overline{\theta}(y')
\left(  \prod_{i=1}^{l-1} f_i(\ux+P_i(y)v_i-P_l(y')v_l) \overline{f_i}(\ux+P_i(y')v_i-P_l(y')v_l) \right)
\]
\[
\times f_l(\ux+P_l(y)v_l-P_l(y')v_l) \overline{F^{\theta}_{l,k}}(\ux)
\left(  \prod_{i=l+1}^k e_p\left((P_i(y)-P_i(y')) \xi_i \right) \right)
\]
\[
= \EE_{\ux \in \FF_p^D}\EE_{y,y' \in \FF_p} \theta(y)\overline{\theta}(y')
\left(  \prod_{i=1}^{l-1} f_i(\ux+P_i(y)v_i) \overline{f_i}(\ux+P_i(y')v_i) \right)
\]
\[
\times f_l(\ux+P_l(y)v_l) \overline{F^{\theta}_{l,k}}(\ux+P_l(y')v_l)
\left(  \prod_{i=l+1}^k e_p\left((P_i(y)-P_i(y')) \xi_i \right) \right),
\]
where we make the change of variable $\ux \leftrightarrow \ux +P_l(y')v_l$ in the previous step. Apply the Cauchy-Schwarz inequality to the $y,y'$ variables, we have
\[
\|G^{\theta}_{l,k} \|_{\cL^2(\FF_p^D)}^8 \leq \EE_{y,y' \in \FF_p}
\]
\[
\times
\left| \EE_{\ux \in \FF_p^D}
\left(  \prod_{i=1}^{l-1} f_i(\ux+P_i(y)v_i) \overline{f_i}(\ux+P_i(y')v_i) \right)  f_l(\ux+P_l(y)v_l) \overline{F^{\theta}_{l,k}}(\ux+P_l(y')v_l)
\right|^2
\]
\[
= \EE_{y,y' \in \FF_p} \EE_{\ux, \uh \in \FF_p^D}
\left(  \prod_{i=1}^{l-1} \triangle_{\uh} f_i(\ux+P_i(y)v_i) \triangle_{\uh}\overline{f_i}(\ux+P_i(y')v_i) \right)  
\]
\[
\times \triangle_{\uh}f_l(\ux+P_l(y)v_l) \triangle_{\uh}\overline{F^{\theta}_{l,k}}(\ux+P_l(y')v_l)
\]
\[
=\EE_{\ux, \uh \in \FF_p^D} 
\left(
\EE_{y \in \FF_p}
\prod_{i=1}^{l} \triangle_{\uh} f_i(\ux+P_i(y)v_i) 
\right)
 \]
 \begin{equation} \label{1_proof_Gowers norm control}
 \times
     \left(
\EE_{y' \in \FF_p} \triangle_{\uh}\overline{F^{\theta}_{l,k}}(\ux+P_l(y')v_l)
\prod_{i=1}^{l-1} \triangle_{\uh}\overline{f_i}(\ux+P_i(y')v_i)
\right).
 \end{equation}
If we set 
\[
F(\ux):=\EE_{y' \in \FF_p} \triangle_{\uh}\overline{F^{\theta}_{l,k}}(\ux+P_l(y')v_l)
\prod_{i=1}^{l-1} \triangle_{\uh}\overline{f_i}(\ux+P_i(y')v_i),
\]
then by Theorem \ref{thm_counting polynomial progressions}, there exists $c'_l=c'_l(\cP)>0$ such that
\[
 \EE_{\ux \in \FF_p^D} F(\ux) 
\EE_{y \in \FF_p}
\prod_{i=1}^{l} \triangle_{\uh} f_i(\ux+P_i(y)v_i) 
\]
\[
= \EE_{\ux \in \FF_p^D} F(\ux) \prod_{i=1}^{l} 
\EE_{n_i \in \FF_p} \triangle_{\uh} f_i(\ux+n_iv_i) 
+O_{\cP}(p^{-c_{l}'}).
\]
Next, if we set
\[
F'(\ux):=\prod_{i=1}^{l} 
\EE_{n_i \in \FF_p} \triangle_{\uh} f_i(\ux+n_iv_i),
\]
then by Theorem \ref{thm_counting polynomial progressions} again, we have
\[
\EE_{\ux \in \FF_p^D} F(\ux) \prod_{i=1}^{l} 
\EE_{n_i \in \FF_p} \triangle_{\uh} f_i(\ux+n_iv_i) 
\]
\[
=\EE_{\ux \in \FF_p^D} F'(\ux) \EE_{y' \in \FF_p} \triangle_{\uh}\overline{F^{\theta}_{l,k}}(\ux+P_l(y')v_l)
\prod_{i=1}^{l-1} \triangle_{\uh}\overline{f_i}(\ux+P_i(y')v_i)
\]
\[
= \EE_{\ux \in \FF_p^D} F'(\ux) 
\EE_{n_l \in \FF_p} \triangle_{\uh}\overline{F^{\theta}_{l,k}}(\ux+n_lv_l)
\prod_{i=1}^{l-1} 
\EE_{n_i \in \FF_p} \triangle_{\uh} \overline{f_i}(\ux+n_iv_i) 
+O_{\cP}(p^{-c_{l}'}).
\]
To conclude, we have shown that
\[
\EE_{\ux \in \FF_p^D} 
\left(
\EE_{y \in \FF_p}
\prod_{i=1}^{l} \triangle_{\uh} f_i(\ux+P_i(y)v_i) 
\right)
\]
\[
\times \left(
\EE_{y' \in \FF_p} \triangle_{\uh}\overline{F^{\theta}_{l,k}}(\ux+P_l(y')v_l)
\prod_{i=1}^{l-1} \triangle_{\uh}\overline{f_i}(\ux+P_i(y')v_i)
\right)
\]
\begin{equation} \label{2_proof_Gowers norm control}
    =\EE_{\ux \in \FF_p^D}  F''(\ux) \EE_{n_l \in \FF_p} \triangle_{\uh}\overline{F^{\theta}_{l,k}}(\ux+n_lv_l)+O_{\cP}(p^{-c_{l}'}),
\end{equation}
for some function 1-bounded $F''(\ux)$. Combining \eqref{1_proof_Gowers norm control} and \eqref{2_proof_Gowers norm control}, we have
\[
\|G^{\theta}_{l,k} \|_{\cL^2(\FF_p^D)}^8 \leq
\EE_{\ux,\uh \in \FF_p^D}  F''(\ux) \EE_{n_l \in \FF_p} \triangle_{\uh}\overline{F^{\theta}_{l,k}}(\ux+n_lv_l)+O_{\cP}(p^{-c_{l}'})
\]
\begin{equation} \label{3_proof_Gowers norm control}
    =\EE_{\uh \in \FF_p^D} \EE_{\ux' \in \langle v_l \rangle^{\perp}}
\left(
\EE_{m_l \in \FF_p} F''(\ux'+m_lv_l)
\right)
\left(
\EE_{n_l \in \FF_p} \triangle_{\uh}\overline{F^{\theta}_{l,k}}(\ux'+n_lv_l)
\right)+O_{\cP}(p^{-c_{l}'}),
\end{equation}
where we write
    \[
    \ux=\ux'+m_l v_l \in \langle v_l \rangle^{\perp} \oplus \langle v_l \rangle, 
    \]
    and make the change of variable $n_l \leftrightarrow n_l-m_l$ in the previous step. Finally, apply the Cauchy-Schwarz inequality to \eqref{3_proof_Gowers norm control}, we have
    \[
    \|G^{\theta}_{l,k} \|_{\cL^2(\FF_p^D)}^{16} \leq
    \EE_{\uh \in \FF_p^D} \EE_{\ux' \in \langle v_l \rangle^{\perp}}
    \left|
    \EE_{n_l \in \FF_p} \triangle_{\uh}\overline{F^{\theta}_{l,k}}(\ux'+n_lv_l)
    \right|^2
    +O_{\cP}(p^{-c_{l}'})
    \]
    \[
    = \EE_{\uh \in \FF_p^D} \EE_{\ux' \in \langle v_l \rangle^{\perp}}
    \EE_{n_l,m_l \in \FF_p} \triangle_{\uh, n_lv_l} F^{\theta}_{l,k}(\ux'+m_lv_l)+O_{\cP}(p^{-c_{l}'})
    \]
    \[
    = \EE_{\uh \in \FF_p^D} \EE_{\ux \in \FF_p^D}
    \EE_{n_l \in \FF_p} \triangle_{\uh, n_lv_l} F^{\theta}_{l,k}(\ux)+O_{\cP}(p^{-c_{l}'})
    \]
    \[
    =\|F^{\theta}_{l,k}\|_{\FF_p^D,\langle v_l \rangle}^4 +O(p^{-c_l}),
    \]
    and the proof is complete.
\end{proof}

Now we are ready to perform the second step of our analysis, the degree-lowering procedure on the dual functions, to finish the proof of Proposition \ref{prop_u^d control}.
\begin{proof} [Proof of Proposition \ref{prop_u^d control}]
    We will proceed by induction on the parameter $l$, while $k$ is a fixed integer. We begin with the base case, where $l=1$. Applying Parseval's identity, we have
    \[
    \|G^{\theta}_{1,k} \|_{\cL^2(\FF_p^D)}^{2}=
    \sum_{\underline{\eta} \in \FF^D_p} \left| \widehat{G^{\theta}_{1,k}}(\underline{\eta})  \right|^2
    \]
    \[
    =\sum_{\underline{\eta} \in \FF^D_p} \left|  \widehat{f_1}(\underline{\eta})
    \EE_{y \in \FF_p} \theta(y) e_p(P_1(y)(\underline{\eta}\cdot v_1))
    \prod_{i=2}^k e_p(P_i(y)\xi_i)
    \right|^2
    \]
    \[
    \leq \sum_{\underline{\eta} \in \FF^D_p} \left|  \widehat{f_1}(\underline{\eta}) \right|^2 \cdot \| \theta \|_{u^{d+1}(\FF_p)}^2
    \]
    \[
    = \EE_{\ux \in \FF^D_p} \left|  f_1(\ux) \right|^2 \cdot \| \theta \|_{u^{d+1}(\FF_p)}^2
    \]
    \[
    \leq \| \theta \|_{u^{d+1}(\FF_p)}^2,
    \]
    and this finishes the proof when $l=1$.
    
    From now on, we will suppose that $2 \leq l \leq k$, and that Proposition \ref{prop_u^d control} is true for $(l-1,k)$. By Lemma \ref{lma_Gowers norm control}, there exists $c_l'=c_l'(\cP)>0$ such that
    \begin{equation} \label{1_proof in prop3.1}
        \|G^{\theta}_{l,k} \|_{\cL^2(\FF_p^D)}^{O(1)} \leq 
    \|F^{\theta}_{l,k}\|_{\FF_p^D,\langle v_l \rangle} +O(p^{-c_l'}).
    \end{equation}
    Next, by the inverse theorem for the box norm, Lemma \ref{lma_U^2 inverse theorem}, we have
    \begin{equation} \label{2_proof in prop3.1}
        \|F^{\theta}_{l,k}\|^4_{\FF_p^D,\langle v_l \rangle} \leq \sup_{\xi_l \in \FF_p} \EE_{\ux \in \FF_p^D} \left| \widehat{F^{\theta}_{l,k}}(\ux;v_l;\xi_l) \right|^2.
    \end{equation}
    From the definition of $F^{\theta}_{l,k}$, we have
    \[
    \widehat{F^{\theta}_{l,k}}(\ux;v_l;\xi_l)=
    \EE_{n \in \FF_p} F^{\theta}_{l,k}(\ux+nv_l) e_p(-n\xi_l)
    \]
    \[
= \EE_{y,y',n \in \FF_p} \theta(y)\overline{\theta}(y')
\left(  \prod_{i=1}^{l-1} f_i(\ux+nv_l+P_i(y)v_i-P_l(y')v_l) \overline{f_i}(\ux+nv_l+P_i(y')v_i-P_l(y')v_l) \right)
\]
\[
\times f_l(\ux+nv_l+P_l(y)v_l-P_l(y')v_l) e_p(-n\xi_l)
\left(  \prod_{i=l+1}^k e_p\left( (P_i(y)-P_i(y'))\xi_i \right) \right)
\]
\[
= \EE_{y,y',n \in \FF_p} \theta(y)\overline{\theta}(y')
\left(  \prod_{i=1}^{l-1} f_i(\ux+nv_l+P_i(y)v_i) \overline{f_i}(\ux+nv_l+P_i(y')v_i) \right)
\]
\[
\times f_l(\ux+nv_l+P_l(y)v_l) e_p(-(n+P_l(y'))\xi_l)
\left(  \prod_{i=l+1}^k e_p\left( (P_i(y)-P_i(y'))\xi_i \right) \right),
\]
where we make the change of variable $n \leftrightarrow n+P_l(y')$ in the previous step. By organizing the terms above, we have
\[
= \EE_{y,n \in \FF_p} \theta(y) e_p(-n\xi_l) \prod_{i=1}^{l} f_i(\ux+nv_l+P_i(y)v_i)  \prod_{i=l+1}^k e_p\left( P_i(y) \xi_i \right) \
\]
\[
\times \EE_{y' \in \FF_p} \overline{\theta}(y') \prod_{i=1}^{l-1} \overline{f_i}(\ux+nv_l+P_i(y')v_i)  \prod_{i=l}^k e_p\left( -P_i(y')\xi_i \right).
\]
By applying the Cauchy-Schwarz inequality, we have
\[
\left| \widehat{F^{\theta}_{l,k}}(\ux;v_l;\xi_l) \right|^2 \leq
    \EE_{n \in \FF_p}
    \left| \EE_{y' \in \FF_p} \overline{\theta}(y') \prod_{i=1}^{l-1} \overline{f_i}(\ux+nv_l+P_i(y')v_i)  \prod_{i=l}^k e_p\left( -P_i(y')\xi_i \right) \right|^2.
\]
Therefore, by the change of variable $\ux \leftrightarrow \ux-nv_l$, we have
\[
\EE_{\ux \in \FF_p^D} \left| \widehat{F^{\theta}_{l,k}}(\ux;v_l;\xi_l) \right|^2
\]
\[
 \leq
\EE_{\ux \in \FF_p^D}
\EE_{n \in \FF_p}
    \left| \EE_{y' \in \FF_p} \overline{\theta}(y') \prod_{i=1}^{l-1} \overline{f_i}(\ux+nv_l+P_i(y')v_i)  \prod_{i=l}^k e_p\left( -P_i(y')\xi_i \right) \right|^2
\]
\[
=
    \EE_{\ux \in \FF_p^D} \left| \EE_{y' \in \FF_p} \overline{\theta}(y') \prod_{i=1}^{l-1} \overline{f_i}(\ux+P_i(y')v_i)  \prod_{i=l}^k e_p\left( -P_i(y')\xi_i \right) \right|^2
\]
\[
= \|G^{\theta}_{l-1,k} \|_{\cL^2(\FF_p^D)}^{2}.
\]
And hence, we apply the induction hypothesis to conclude that
\begin{equation} \label{3_proof in prop3.1}
    \EE_{\ux \in \FF_p^D} \left| \widehat{F^{\theta}_{l,k}}(\ux;v_l;\xi_l) \right|^{O_{\cP}(1)} 
    \leq \| \theta \|_{u^{d+1}(\FF_p)} + O_{\cP}\left(p^{-c_{l-1}} \right),
\end{equation}
for some $c_{l-1}=c_{l-1}(\cP)>0$.

Finally, combining \eqref{1_proof in prop3.1}, \eqref{2_proof in prop3.1}, and \eqref{3_proof in prop3.1} closes the induction, and the proof is complete.

\end{proof}

\bibliographystyle{alpha}
\bibliography{bibliography}

\begin{thebibliography}{Kuc24b}

\bibitem[BC17]{BC17}
J.~Bourgain and M.-C. Chang.
\newblock Nonlinear {R}oth type theorems in finite fields.
\newblock {\em Israel J. Math.}, 221(2):853--867, 2017.

\bibitem[DLS20]{DLS20}
Dong Dong, Xiaochun Li, and Will Sawin.
\newblock Improved estimates for polynomial {R}oth type theorems in finite fields.
\newblock {\em J. Anal. Math.}, 141(2):689--705, 2020.

\bibitem[FH17]{FH17}
Nikos Frantzikinakis and Bernard Host.
\newblock Multiple ergodic theorems for arithmetic sets.
\newblock {\em Trans. Amer. Math. Soc.}, 369(10):7085--7105, 2017.

\bibitem[FHK07]{FHK07}
Nikos Frantzikinakis, Bernard Host, and Bryna Kra.
\newblock Multiple recurrence and convergence for sequences related to the prime numbers.
\newblock {\em J. Reine Angew. Math.}, 611:131--144, 2007.

\bibitem[FK05]{FK05}
Nikos Frantzikinakis and Bryna Kra.
\newblock Polynomial averages converge to the product of integrals.
\newblock volume 148, pages 267--276. 2005.
\newblock Probability in mathematics.

\bibitem[HK05]{HK05}
Bernard Host and Bryna Kra.
\newblock Convergence of polynomial ergodic averages.
\newblock volume 149, pages 1--19. 2005.
\newblock Probability in mathematics.

\bibitem[HL25]{HL25}
Guo-Dong Hong and Zi~Li Lim.
\newblock Three {T}erm {R}ational {F}unction {P}rogressions in {F}inite {F}ields.
\newblock {\em Int. Math. Res. Not. IMRN}, (10):rnaf118, 2025.

\bibitem[HLY21]{HLY21}
Rui Han, Michael~T. Lacey, and Fan Yang.
\newblock A polynomial {R}oth theorem for corners in finite fields.
\newblock {\em Mathematika}, 67(4):885--896, 2021.

\bibitem[Kuc24a]{K24b}
Borys Kuca.
\newblock Multidimensional polynomial patterns over finite fields: bounds, counting estimates and {G}owers norm control.
\newblock {\em Adv. Math.}, 448:Paper No. 109700, 61, 2024.

\bibitem[Kuc24b]{K24a}
Borys Kuca.
\newblock Multidimensional polynomial {S}zemer\'edi theorem in finite fields for polynomials of distinct degrees.
\newblock {\em Israel J. Math.}, 259(2):589--620, 2024.

\bibitem[Lei05]{L05}
A.~Leibman.
\newblock Convergence of multiple ergodic averages along polynomials of several variables.
\newblock {\em Israel J. Math.}, 146:303--315, 2005.

\bibitem[Lim25]{L25}
Zi~Li Lim.
\newblock Uniform nonlinear szemer$\backslash$'$\{$e$\}$ di theorem for corners in finite fields.
\newblock {\em arXiv preprint arXiv:2501.04887}, 2025.

\bibitem[Pel18]{P18}
Sarah Peluse.
\newblock Three-term polynomial progressions in subsets of finite fields.
\newblock {\em Israel J. Math.}, 228(1):379--405, 2018.

\bibitem[Pel19]{P19}
Sarah Peluse.
\newblock On the polynomial {S}zemer\'edi theorem in finite fields.
\newblock {\em Duke Math. J.}, 168(5):749--774, 2019.

\bibitem[Ter24]{T24}
Joni Ter{\"a}v{\"a}inen.
\newblock Pointwise convergence of ergodic averages with m$\backslash$" obius weight.
\newblock {\em arXiv preprint arXiv:2401.03174}, 2024.

\bibitem[Wal12]{W12}
Miguel~N. Walsh.
\newblock Norm convergence of nilpotent ergodic averages.
\newblock {\em Ann. of Math. (2)}, 175(3):1667--1688, 2012.

\bibitem[WZ12]{WZ12}
Trevor~D. Wooley and Tamar~D. Ziegler.
\newblock Multiple recurrence and convergence along the primes.
\newblock {\em Amer. J. Math.}, 134(6):1705--1732, 2012.

\end{thebibliography}

\end{document}